\DeclareMathOperator{\re}{Re}
\newtheorem{theorem}{Theorem}
\newtheorem{lemma}[theorem]{Lemma}
\numberwithin{theorem}{section}
\theoremstyle{remark}
\newtheorem*{remark}{Remark}
\let\@@pmod\pmod
\DeclareRobustCommand{\pmod}{\@ifstar\@pmods\@@pmod}
\def\@pmods#1{\mkern 2mu({\operator@font mod}\mkern 3mu#1)}
\begin{document}

\title[Computing quadratic Dirichlet $L$-functions]{A fast
amortized algorithm for computing quadratic Dirichlet $L$-functions}
\author[G.A. Hiary]{Ghaith A. Hiary}
\thanks{Preparation of this material is partially supported by
the National Science Foundation under agreements No. 
 DMS-1406190. The author is pleased to thank
  ICERM where part of this work was conducted.}
\address{Department of Mathematics, The Ohio State University, 231 West 18th
Ave, Columbus, OH 43210.}
\email{hiary.1@osu.edu}
\subjclass[2010]{Primary 11Y16, 11Y35, 11M06.}
\keywords{Quadratic character, Dirichlet $L$-functions,
exponential sums, amortized algorithm.}

\begin{abstract}
An algorithm to compute Dirichlet $L$-functions 
for many quadratic characters is derived. The algorithm
is optimal (up to logarithmic factors) provided that 
the conductors of 
the characters under consideration span a dyadic window.
\end{abstract}

\maketitle

\section{Introduction} \label{intro}

Let $s=\sigma+it$ where $\sigma$ and $t$ are real, 
$q>1$ a positive integer, $\chi$ 
a primitive quadratic Dirichlet character modulo $q$, 
and $L(s,\chi)$ the corresponding Dirichlet $L$-function.
In this article, we derive an algorithm 
to compute $L(s,\chi)$ for all quadratic characters $\chi$ with conductor 
$q \in [Q,Q+\Delta)$.
The algorithm is essentially optimal provided that
$\Delta$ is of comparable size to $Q$, and so can be viewed as an
``Odlyzko--Sch\"onhage algorithm''in the $q$-aspect (rather than 
the $t$-aspect). This represents the first progress of its kind
in amortized algorithms for $L$-functions 
since the Odlyzko--Sch\"onhage
algorithm for the zeta function. 
To state the main theorem, let $d(q)$ denote the number of 
divisors of $q$. 

\begin{theorem}\label{main thm}
Given any $t$, there are
constants $A_j:=A_j(t)$, $j\in\{1,2,3,4\}$, such that for every
$\epsilon>0$ and $1\le \Delta < Q/2$,
the function $L(1/2+it,\chi)$ can be numerically evaluated
to within $\epsilon$ for any quadratic characters $\chi$ with 
conductor $q\in [Q,Q+\Delta)$ using 
$\le A_1 d(q) \log(Q/\epsilon)$ 
elementary arithmetic operations 
on numbers of $\le A_2 \log(Q/\epsilon)$ bits
provided that a precomputation requiring $\le A_3 Q\log^4(Q/\epsilon)$ 
operations
and $\le A_4 Q\log^4(Q/\epsilon)$ bits of storage is
first performed. 
\end{theorem}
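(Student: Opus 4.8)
The plan is to reduce the evaluation of $L(1/2+it,\chi)$ to a single finite ``main sum,'' and then to amortize the evaluation of these main sums over all conductors $q\in[Q,Q+\Delta)$ at once. First I would fix a smooth, rapidly decaying cutoff $w$ and use a smoothed approximate functional equation to write
\[
L(1/2+it,\chi)=\sum_{n\ge 1}\left(\frac{d}{n}\right)g(n)+(\text{dual term})+O(\epsilon),
\]
where $d$ is the fundamental discriminant attached to $\chi$, $g(n)=w(n/X)\,n^{-1/2-it}$, and $X\asymp\sqrt{q(1+|t|)/(2\pi)}$, so that $X\asymp\sqrt Q$ once the dependence on $t$ is absorbed into the constants $A_j$. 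Since $\chi$ is primitive and real the dual term has the same shape (with a root number $\pm1$), and the tail is controlled by the decay of $w$, so truncating at $n\lesssim X\log(1/\epsilon)$ costs only $\epsilon$. This isolates the arithmetic content in the Kronecker-symbol sums $\sum_n\left(\frac dn\right)g(n)$ of length $\asymp\sqrt Q$.

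Next I would localize and linearize. Splitting the range of $n$ into $O(\log Q)$ dyadic blocks $[N,2N)$ and expanding the smooth weight on each block into a truncated Fourier series $g(n)=\sum_{|k|\le K}a_{N,k}\,e(kn/N)+O(\epsilon)$ with $K\asymp\log(Q/\epsilon)$, the problem collapses to evaluating the canonical family of character--exponential sums
\[
C(d;N,k)=\sum_{N\le n<2N}\left(\tfrac dn\right)e(kn/N)
\]
for every $d$ in the window. The hinge of the whole argument is the asymmetry created by quadratic reciprocity: as a function of $d$ the Kronecker symbol $\left(\frac d\cdot\right)$ has period $\asymp n\lesssim\sqrt Q$, which is far \emph{smaller} than the window length $\Delta\asymp Q$, whereas as a function of $n$ its period is $\asymp q\asymp Q$, far larger than the block. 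Exploiting the first of these via a Gauss-sum (finite Fourier) expansion over $\Z/n$, each $C(d;N,k)$ becomes a short superposition of the residue-class data $d\bmod m$ for $m\le 2N$, and I would precompute, once and for all, the length-$m$ transforms attached to every modulus $m\le 2N$; this table has total size $\sum_{m\le 2N}m\asymp N^2\asymp Q$, matching the claimed storage.

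The main obstacle, and what distinguishes this from the Odlyzko--Sch\"onhage algorithm in the $t$-aspect, is that $d\mapsto L(1/2+it,\chi_d)$ is \emph{not} band-limited: its dependence on the conductor is arithmetic rather than analytic, so one cannot merely sample on a coarse grid and interpolate. Beating the trivial cost $\Delta\cdot\sqrt Q\asymp Q^{3/2}$ therefore requires genuinely exploiting the reciprocity-induced small period $\asymp n$ to assemble all the sums $C(d;N,k)$ by a single fast transform during the precomputation, while simultaneously arranging the per-conductor recombination so that each $L$-value is recovered from only $O(d(q))$ precomputed pieces. I expect the divisor factor to enter precisely here, through the multiplicative (CRT) factorization of the underlying Gauss sums and of the reciprocity relations across the prime-power parts of $q$, which splits the query into $2^{\omega(q)}\le d(q)$ independent contributions. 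I would organize the moduli $m\le 2N$ hierarchically so that the precomputation runs in $\tilde O(Q)$ operations and each query touches only $O\bigl(d(q)\log(Q/\epsilon)\bigr)$ of the stored data.

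Finally I would track precision throughout: every series is truncated after $O(\log(Q/\epsilon))$ terms, all transcendental quantities (the values $e(\cdot)$, the factors $n^{-1/2-it}$, and the archimedean pieces of the functional equation) are computed to $O(\log(Q/\epsilon))$ bits, and the accumulated round-off over the $\tilde O(Q)$ precomputation operations and the $O(d(q)\log(Q/\epsilon))$ query operations is bounded below $\epsilon$. Collecting these estimates yields the stated bounds on the number of operations, the bit length, and the storage.
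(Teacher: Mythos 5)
You have the paper's high-level strategy essentially right: pass to a smoothed main sum of length $\asymp\sqrt{Q}$, use reciprocity to view the quadratic symbol as a function of the conductor with small period $O(n)=O(\sqrt Q)$, expand that function in exponentials $e(ad/n)$, and thereby reduce the whole family to evaluating one long exponential sum at all integers $d$ in the window. But two steps that carry the real weight of the theorem are missing. First, the archimedean weight is not conductor-independent: in any form of the smoothed functional equation the cutoff scale is tied to $q$ (in the paper the weight is $V_{1/2+it+\mathfrak{a}}(\pi n^2/q)$), so your Fourier coefficients $a_{N,k}$ depend on $d$, and the decomposition $L=\sum_{N,k}a_{N,k}C(d;N,k)$ no longer separates the per-conductor data from the precomputable data; fixing $X\asymp\sqrt Q$ only moves the $q$-dependence into the dual sum, since the two cutoff scales must multiply to $\asymp q$. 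The paper spends its Section 3 on exactly this point: the incomplete-Gamma weight is Taylor-expanded around $q=Q$, and $R\asymp\log(N/\epsilon_2)/\log(Q/\Delta)$ terms suffice by Lemma~\ref{R2 lemma}. Note that this is precisely where the hypothesis $\Delta<Q/2$ enters (it makes the expansion converge geometrically), and your proposal never uses that hypothesis --- a reliable sign that the component requiring it is absent. Second, the ``single fast transform'' that does all the work is never specified, and a plain FFT cannot do it: after your Gauss-sum expansion, the precomputation must evaluate a sum of $\asymp\sum_{m\le 2N}m\asymp Q$ exponentials $e(ad/m)$ whose frequencies $a/m$ are \emph{not} equally spaced, at all $\asymp\Delta$ integers $d$ in the window. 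The tool that achieves this in $Q\log^{O(1)}(Q/\epsilon)$ operations is the Odlyzko--Sch\"onhage multi-evaluation (Theorem~\ref{os algorithm}), i.e.\ the rational-function/fast-multipole method or a nonuniform FFT; ``organizing the moduli hierarchically'' gestures at this but is not an algorithm, and without it the cost reverts to $\asymp\Delta\sqrt{Q}$.

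Separately, your account of where the factor $d(q)$ comes from does not correspond to anything your construction needs, and it exposes that you and the paper use two different Gauss-sum expansions. Your expansion, $\left(\frac{d}{n}\right)=g_n^{-1}\sum_{a\bmod n}\left(\frac{a}{n}\right)e(ad/n)$ for odd squarefree $n$ (with modifications otherwise), is an identity for \emph{all} $d$ --- both sides vanish when $(d,n)>1$ --- so your route requires no coprimality correction and no CRT splitting; if completed, it would give a per-query cost of $O(\log Q\cdot\log(Q/\epsilon))$ with no $d(q)$ factor at all. The paper instead represents $\chi_q(n)$ by the theta-type sum $g_q(2n)=\sum_{\ell}e^{2\pi i q\ell^2/(2n)}$, an identity valid only when $(n,q)=1$; restoring the coprimality condition forces the inclusion-exclusion \eqref{inc-exc} over the primes dividing $q$, and that is the sole source of the $\le d(q)$ precomputed pieces per query. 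Your CRT explanation appears to be reverse-engineered from the statement of the theorem rather than derived from your construction; this is worth fixing, because stating correctly where $d(q)$ arises (or showing that your variant avoids it) is part of understanding why the theorem is true.
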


The average of the divisor function $d(q)$ 
over $q\in [Q,Q+\Delta)$, $\Delta\asymp Q$, is $\ll \log
Q$ (see e.g.\ \cite[\textsection{12.1}]{titchmarsh}).
Hence, Theorem~\ref{main thm} 
says that the average complexity of computing 
$L(1/2+it,\chi)$ is poly-log in $Q$ and
$1/\epsilon$. 
This is a large improvement over the
approximate functional equation which, in comparison,
requires $Q^{1/2+o_{\epsilon}(1)}$ time
per evaluation.\footnote{There is currently 
no algorithm for computing $L(s,\chi)$ at a single $\chi$ that
improves on the approximate functional equation in the $q$-aspect
except for an algorithm due to
the author in the case $\chi$ is to a powerfull modulus, but this case 
excludes quadratic characters.} 

The assumption in the theorem that 
$\Delta < Q/2$ is not essential, and
$\Delta \ll Q$ still works simply by dividing into further dyadic sections if
needed. Moreover, the upper bound 
$Q/2$ can be replaced by $c\, Q$ for any positive constant
$c<1$, though the dependence of the 
constants $A_j$ on $c$ would then be of the form
$1/\log(1/c)$.

The theorem is stated for $s$ on the critical line 
because our main interest is in computing low zeros of quadratic
Dirichlet $L$-functions. 
But the theorem applies just as well
to other values of $s$; e.g.\ to $s=1$.

Given the wide availability of fast
computer memory nowadays, the algorithm underlying Theorem~\ref{main thm} 
is practical. In fact, using a clever implementation
one can hope to save at least one power of the logarithm (and probably two
powers in the case of storage space)
on powers of the logarithm that appear in the theorem, and to
remove the dependence on the requested accuracy 
$\epsilon$ in some cases.

Of course, Theorem~\ref{main thm} is not useful if one is interested in evaluating 
$L(1/2+it,\chi)$ for fewer than $\sqrt{Q}$
different $\chi$, since in this case the approximate functional equation fares
better.
However, in some applications, such as the locating of low zeros
of all quadratic Dirichlet $L$-functions with conductor in a dyadic section, 
our theorem is suitable. And even if one is interested in smaller sections, 
the theorem may still be of utility. 
For the proof of the theorem yields that 
if $\Delta=Q^{1-\delta}$, then the average time to compute all 
$L(s,\chi)$ with conductor in $[Q,Q+\Delta)$ is 
$Q^{\delta+o_{\epsilon}(1)}$ operations. 
 This is an improvement over the approximate functional equation 
if $\delta <1/2$.

The strategy behind Theorem~\ref{main thm} is roughly this. 
First, we use the well-known formula involving quadratic Gauss sums 
to express each quadratic character $\chi$ as a Fourier series. 
This representation of $\chi$ implicitly involves a single application of quadratic
reciprocity, which is important as without it our algorithm achieves no savings.
The resulting main sum for $L(s,\chi)$ is then an exponential sum 
of length $Q^{1+o_{\epsilon}(1)}$ terms.
The advantage of having replaced $\chi$ by 
a quadratic Gauss sum is that now each $\chi$ corresponds to 
a point on the unit circle 
where we want to evaluate the exponential sum.
So we proceed similarly 
to the Odlyzko--Sch\"onhage algorithm from here, 
applying a fast multipole method to numerically evaluate 
 the exponential sum at all the required points on the unit circle 
in $(\Delta+Q)^{1+o_{\epsilon}(1)}$ time and 
requiring $(\Delta+Q)^{1+o_{\epsilon}(1)}$  bits of storage space.

Actually, our algorithm is slightly more complicated 
than this because 
there is dependence on $q$ through
a weighting function $V_z(w)$ in the main sum; see formula 
\eqref{afe Z}. But this dependence is 
relatively weak 
and is easily removed via a Taylor expansion. Another complication 
is that representing $\chi$ by
a quadratic Gauss sum when $q$ is composite and varying in a range
necessitates using an inclusion-exclusion formula; see formula \eqref{inc-exc}.

Interestingly, Theorem~\ref{main thm} might generalize to
 certain families of elliptic curve $L$-functions, though there are important
 differences with the present case.
This will be the subject of future work.

Sections 2--5 present the essential ingredients of the algorithm in
Theorem~\ref{main thm}.
Section \ref{main thm proof} contains a proof of the theorem.
Section \ref{improvements} discusses improvements to the algorithm. And Section 
\ref{proofs of lemmas} contains proofs of auxiliary lemmas. 

\vspace{3mm}
\noindent
\textbf{Acknowledgment.} This paper was motivated by a question of Brian Conrey
about computing with various families of $L$-functions
during the ICERM special semester in Fall 2015. I would like to thank Leslie
Greengard for pointing out references about the non-uniform fast Fourier
transform.

\section{The main sum}
Let $\chi$ 
be a non-principal quadratic Dirichlet character
of modulus $q > 1$. 
The Dirichlet $L$-function associated with $\chi$ is 
\begin{equation}
L(s,\chi) := \sum_{n=1}^{\infty} \frac{\chi(n)}{n^s},\qquad \sigma > 0.
\end{equation}
The completed $L$-function is 
$\Lambda(s,\chi) := q^{s/2} \gamma(s) L(s,\chi)$,
where
\begin{equation}
\gamma(z) := \pi^{-z/2} \Gamma\left(\frac{z+\mathfrak{a}}{2}\right),\qquad
\mathfrak{a}:= \frac{1-\chi(-1)}{2}\in \{0,1\}. 
\end{equation}
If $\chi$ is primitive, then
the functional equation takes on the simple form 
$\Lambda(s,\chi) = \Lambda(1-s,\chi)$. 

Using the method of the smoothed functional equation (see
e.g.\ \cite{rubinstein}),
one obtains the following formula to compute $L(s,\chi)$. 
Let $\Gamma(z,w)$ denote the incomplete gamma function, 
\begin{equation}\label{incomp gamma}
\Gamma(z,w) = w^z \int_1^{\infty} e^{-wy} y^{z-1}dy,\qquad
\re(w)>0.
\end{equation}
And let
\begin{equation}\label{V def}
V_z(w) := \frac{\Gamma(z/2,w)}{\Gamma(z/2)},\qquad
\rho(s) :=q^{1/2-s}\frac{\gamma(1-s)}{\gamma(s)}.
\end{equation}
Then we have
\begin{equation}\label{afe L}
L(s,\chi) = \sum_{n=1}^{\infty} \frac{\chi(n)}{n^s}
V_{s+\mathfrak{a}}(\pi n^2/q) + \rho(s) 
\sum_{n=1}^{\infty} \frac{\chi(n)}{n^{1-s}}
V_{1-s+\mathfrak{a}}(\pi n^2/q).
\end{equation}
The weighting function $V$ ensures the absolute convergence of the series
in \eqref{afe L}.
So the series provides an analytic continuation of $L(s,\chi)$ to the entire
complex plane. 
Henceforth, we restrict to the critical line $s=1/2+it$.

As a consequence of the functional equation, $\Lambda(1/2+it,\chi)$ is
real-valued for real $t$. This desirable feature of $\Lambda$ 
simplifies 
locating roots on the critical line by looking for sign changes of
$\Lambda(1/2+it)$.
However, unlike $L(1/2+it,\chi)$ itself, the function 
$\Lambda(1/2+it,\chi)$ experiences exponential 
decay with $t$ due to the built-in $\gamma$ factor. This 
makes $\Lambda$ unsuitable for numerical computations when 
$t$ is large; e.g.\ very
high accuracy is needed to confirm sign changes.
In view of this, one is motivated to introduce the functions
\begin{equation}\label{afe}
\theta(t,\mathfrak{a}) := \arg\left[q^{it/2} \gamma(1/2+it)\right],\qquad
Z(t,\chi) := e^{i\theta(t,\mathfrak{a})}L(1/2+it,\chi).
\end{equation}
So $Z(t,\chi)$ is of the same modulus as $L(1/2+it,\chi)$ while also 
analytic and real-valued for real $t$.
Furthermore, the formula \eqref{afe L} now reads
\begin{equation}\label{afe Z}
Z(t,\chi) = 2\re \left[ e^{i\theta(t,\mathfrak{a})}
\sum_{n=1}^{\infty} \frac{\chi(n)}{n^{1/2+it}} 
V_{1/2+it+\mathfrak{a}}(\pi n^2/q)\right]. 
\end{equation}

As pointed out in \cite{rubinstein}, 
there is still a problem of catastrophic cancellation in \eqref{afe Z} because 
the terms in the series get very large and oscillatory if $t$ is large. 
To fix the problem, one can use the same modification 
in \cite{rubinstein},
after which the algorithm that we describe here works even when $t$ is large. 
Our focus, though, will be on computing low zeros of $Z(t,\chi)$. 
So we might imagine without loss of generality that $t\in [0,1]$ say
(or $t\in [0,c_1]$ for any fixed $c_1>0$). 
Then the problem of catastrophic cancellation no longer arises, and 
the formula \eqref{afe Z} is good enough as is. 
One could also use formula \eqref{afe L} and work with $\Lambda(1/2+it,\chi)$
directly. In either case, the dependence of the constants $A_j$ on $t$ is 
roughly of the form $e^{\pi t/4}$.

There is a sharp drop-off in the size of the weighting function $V$ when $n$ is
around the square-root of the analytic conductor
$\mathfrak{q}:=\mathfrak{q}(s)=q|s|/2\pi$, where $s=1/2+it$. 
The drop-off is so quick that one can
obtain a truncation error of at most $\epsilon_1$ simply on stopping the series 
in \eqref{afe Z} after $\approx \sqrt{2\mathfrak{q}
\log(\mathfrak{q}/\epsilon_1)}$ terms. 
Hence, we can choose a uniform truncation point $N:=N_{Q,\Delta}$ for the series
 in \eqref{afe Z} for all quadratic conductors 
$q\in [Q,Q+\Delta)$. This yields the formula 
\begin{equation}\label{afe error}
Z(t,\chi) = 2\re \left[ e^{i\theta(t,\mathfrak{a})}
F(t,\chi)+ \mathcal{R}_1(t,N,q)\right], 
\end{equation}
where
\begin{equation}\label{Ftchi}
F(t,\chi) = \sum_{n=1}^N \frac{\chi(n)}{n^{1/2+it}} 
V_{1/2+it+\mathfrak{a}}(\pi n^2/q).
\end{equation}

We now enforce the bound $\Delta < Q$, as in Theorem~\ref{main
thm}, as well as the bound $N > \sqrt{2Q/\pi}$, which our choice of $N$ will 
ultimately satisfy if $Q$ is large enough.
Then, given $0< \epsilon_1 <1$,  we can ensure 
via Lemma~\ref{R1 lemma} in Section~\ref{proofs of lemmas} that 
\begin{equation}
\mathcal{R}_1(t,N,Q,\Delta) := \max_{q \in
[Q,Q+\Delta)} |\mathcal{R}_1(t,N,q)|<\epsilon_1,
\end{equation}
subject to
\begin{equation}\label{N choice}
N\ge  \sqrt{(2Q/\pi) \log(Q/\epsilon_1)}.
\end{equation}
Note that this choice of $N$ 
satisfies our earlier working assumption 
$N>\sqrt{2Q/\pi}$, provided that $Q > 10^4$ say.

As for the rotation phase $\theta(t,\mathfrak{a})$ in \eqref{afe Z},
its numerical evaluation of 
can be done efficiently using known numerical algorithms
for the Gamma function (e.g. \cite{lanczos,spouge}), or 
using  the asymptotic expansion 
\begin{equation}
\theta(t,\mathfrak{a}) \sim \frac{t}{2}\log\left(\frac{q t}{2\pi e}\right)+
\frac{\pi(2\mathfrak{a}-1)}{8} + \frac{1}{48t}+\cdots,\qquad t\to
\infty,
\end{equation}
for large $t$, which is derived via the Stirling formula.

In summary, to compute $Z(t,\chi)$ for a given $t$ and 
$\chi$ with conductor $q\in [Q,Q+\Delta)$, 
it is enough to focus on the sum $F(t,\chi)$ in \eqref{Ftchi}.

The treatment of $F(t,\chi)$ will be separated into several cases.
To begin, recall that a real primitive character exists to a prime power modulus
if and only if the modulus is an odd prime $p$ in which case 
the character is the quadratic symbol 
\begin{equation}
\left(\frac{n}{p}\right) =
\left(\frac{(-1)^{(p-1)/2}p}{n}\right),
\end{equation}
or the modulus is $4$ in which case the character is
$\chi_4(n) = (-1)^{(n-1)/2}\mathds{1}_{n\,\textrm{odd}}$, 
or the modulus is 
$8$ in which case there are two characters, $\chi_8(n) = (-1)^{(n^2-1)/8}\mathds{1}_{n\,\textrm{odd}}$
and $\chi_4(n)\chi_8(n)= (-1)^{(n^2+4n-5)/8}\mathds{1}_{n\,\textrm{odd}}$; see e.g.\ \cite{davenport}. 
Hence, the only quadratic conductors $q$ 
are products such moduli, subject to the moduli being relatively prime. 
If $q$ is 
not divisible by $8$, then there is one primitive 
quadratic character with modulus $q$, otherwise there are two primitive
characters. 

The cases of even and odd $q$ will be handled separately by our algorithm.
Furthermore, in order 
to fix the form of the functional equation of the $L$-function, 
we deal with the possibilities $\mathfrak{a}=0$
or $\mathfrak{a}=1$ separately also.
Since our algorithm will apply 
to each situation analogously,
we henceforth fix $\mathfrak{a}=0$ and $q$ is odd. This corresponds to 
positive odd fundamental discriminants, which we denote by
$\mathcal{Q}_{\textrm{odd}}^+$.
In particular, there is now a unique $\chi_q$ associated with each $q\in
\mathcal{Q}_{\textrm{odd}}^+$, and we may unambiguously write $F(t,q)$ instead of $F(t,\chi_q)$ for the main sum. 

%\begin{figure}[!ht]
%\includegraphics[scale=0.33, trim = 0 15 0 30, clip]{}
%\caption{}
%\label{}
%\end{figure}

\section{The Taylor expansion}
We use the Taylor expansion to remove the dependence on $q$ in the weighting
function $V$. To this end, define
\begin{equation}
G_z(w) := \int_1^{\infty} e^{-wy} y^{z-1}dy.
\end{equation}
By definition, we have
\begin{equation}
\begin{split}
\frac{V_{1/2+it}(\pi n^2/q)}{n^{1/2+it}} =
C(t,q) G_{1/4+it/2}(\pi n^2/q),
\quad C(t,q):=\frac{\left(\pi/q\right)^{1/4+it/2}}{\Gamma(1/4+it/2)}.
\end{split}
\end{equation}
We apply the Taylor expansion to $G_z(w)$ in the $w$ variable to obtain 
\begin{equation}\label{G taylor exp}
G_{1/4+it/2}(\pi n^2/q)= \sum_{r=0}^{\infty}
\frac{G^{(r)}_{1/4+it/2}(\pi n^2/Q)}{r!}  \left(\frac{\pi n^2 (Q-q)}{Qq}\right)^r, 
\end{equation}
where $G^{(r)}_z(w)$ is the $r$-th derivative of $G$ with respect to $w$.
Let 
\begin{equation}
c_r(t,n) := \frac{G^{(r)}_{1/4+it/2}(\pi n^2/Q)}{r!} \left(\frac{\pi n^2}{Q}\right)^r,
\end{equation}
and let
\begin{equation}
F_r(t,q) := \sum_{n=1}^N c_r(t,n)\chi_q(n).
\end{equation}
Truncating the series expansion in \eqref{G taylor exp} after $R$ terms
then yields
\begin{equation}\label{R2 formula}
F(t,q) = C(t,q)
\sum_{r=0}^{R-1}  
\left(\frac{Q-q}{q}\right)^r F_r(t,q) +
\mathcal{R}_2(t,N,q,R).
\end{equation}
Furthermore, given $0<\epsilon_2<1$, we can ensure via Lemma~\ref{R2 lemma} in
Section~\ref{proofs of lemmas} that
\begin{equation}
\mathcal{R}_2(t,N,Q,\Delta,R) := \max_{q \in
[Q,Q+\Delta)} |\mathcal{R}_2(t,N,q,R)| < \epsilon_2,
\end{equation}
subject to
\begin{equation}\label{R choice}
R \ge \frac{\log(N/\epsilon_2)}{\log(Q/\Delta)}. 
\end{equation}
Note that $R$ depends only logarithmically on $Q$ and $\epsilon_2$, and so grows
slowly with these parameters.

The importance of the Taylor expansion \eqref{R2 formula} 
is it allows one to avoid having to re-calculate the
weighting function $V$ for each new $q$.
This has a large impact on practical running time, 
as was observed in \cite{stopple2009}.
Indeed, using the recursions described in \cite{stopple2009}, 
the computation of the derivatives of $G_z(w)$ can be reduced 
quickly
to that of computing $G_z(w)$ itself. In turn, the function 
$G_z(w)$ can be computed efficiently 
using one of the many algorithms
for the incomplete Gamma function; see \cite{rubinstein} for a survey.

\begin{remark}
We still remain within the target complexity of Theorem~\ref{main thm}
even if each evaluation of $G_{1/4+it/2}(\pi n^2/Q)$ takes as much as $\Delta/N$ time and
space. This is 
because there are only $N$ values of $G_{1/4+it/2}(\pi n^2/Q)$ that we need to
compute. 
\end{remark}

In summary, 
we may assume that the coefficients $c_r(t,n)$ have been precomputed.

\section{Exponential sums}\label{exp sums sec}

Consider the quadratic exponential sum (Gauss sum) 
\begin{equation}\label{gqn}
g_q(n):= \sum_{\ell=0}^{2n-1} e^{\pi i q\ell^2/n}.
\end{equation}
By results in e.g.\ \cite{davenport,BEW, FJK} we have the Fourier expansion 
\begin{equation}
\chi_q(n) =  
g(q) g_q(2n), \qquad g(q):=\frac{e^{\pi i q(q-2)/8 - \pi i /8}}{2\sqrt{2}}, 
\end{equation}
subject to $q$ being odd and $(n,q)=1$.
Moving the factor $g(q)$, which is independent of $n$, to the outside,  
we see that 
\begin{equation}\label{F sum 2}
F_r(t,q)=g(q)
\sum_{\substack{n=1 \\ (n,q)=1}}^N \frac{c_r(t,n)g_q(2n)}{\sqrt{n}}.
\end{equation}

We wish to remove the summation condition $(n,q)=1$ in \eqref{F sum 2}
because it will prevent 
us from applying the multi-evaluation method in the next section. 
With this in mind, 
note that if $q\in \mathcal{Q}_{\textrm{odd}}^+$, then
$q=p_1\cdots p_{\omega}$ where $p_h$ are distinct odd primes. So letting 
\begin{equation}
\tilde{S}_r(t,q,a) := \sum_{1\le m \le N/a}  \frac{c_r(t,am)g_q(2am)}{\sqrt{am}},
\end{equation}
$\mathcal{P}(q)=\{\textrm{prime } P\,:\, P\,|\,q \textrm{ and } P\le
N\}$, 
and applying inclusion-exclusion, $F_r(t,q)$ can be expressed as
\begin{equation}\label{inc-exc}
F_r(t,q)=g(q)\sum_{h=0}^{|\mathcal{P}(q)|} 
(-1)^h \sum_{\{P_1,\ldots,P_h\}\subseteq \mathcal{P}(q)}\tilde{S}_r(t,q,P_1\cdots P_h).
\end{equation}
If $P_1\cdots P_h > N$, then $\tilde{S}_r(t,q,P_1\cdots P_h) = 0$ 
because the sum is empty.
In particular, we may restrict the inner sum in \eqref{inc-exc}
to subsets $\{P_1,\ldots,P_h\}$ such that the product is $\le N$.
Moreover, using the notation $a=P_1\cdots P_h$ and $b=q/a$,
we have $g_q(2am) = a g_b(2m)$. So, if we let
\begin{equation}\label{tildeS S0}
S_r(t,a,b):= \sum_{1\le m \le N/a}  \frac{c_r(t,am)g_b(2m)}{\sqrt{m}},
\end{equation}
then
\begin{equation}\label{tildeS S}
\tilde{S}_r(t,q,P_1\ldots P_h) = \sqrt{a}S_r(t,a,b). 
\end{equation}

Put together, and in light of the preceding observations,
it suffices to compute the sum $S_r(t,a,b)$ 
for all tipples $(r,a,b)$ such that 
$Q/a \le b < Q/a+\Delta/a$, $1\le a\le N$, and $0\le r< R$.
After that, $Z(t,\chi_q)$ can be recovered for all odd 
quadratic
conductors $q\in
[Q,Q+\Delta)$ by back-substitution into formulas \eqref{tildeS
S}, \eqref{inc-exc}, \eqref{R2 formula}, and \eqref{afe error}. 

\section{Multiple evaluations algorithm}

Consider an exponential sum of the form
\begin{equation}\label{exp sum}
Z(h):=\sum_{k=1}^K a_k e^{2\pi i \alpha_k h},\qquad 0\le \alpha_k < 1,\quad
|a_k|\le 1.
\end{equation}
The Odlyzko--Sch\"onhage algorithm~\cite{odlyzko-schonhage} 
provides an efficient method to compute 
$Z(h)$ for all integers $h\in [-H/2,H/2)$.
Briefly, the problem is transformed via the fast Fourier
transform to that of evaluating a sum of rational functions of the form
\begin{equation}\label{rational func}
\sum_{k=1}^K \frac{\beta_k}{z-e^{-2\pi i \alpha_k}},\qquad 
\beta_k :=a_k e^{-2\pi i \alpha_k}(e^{2\pi i \alpha_k H}-1), 
\end{equation}
at all the $H$-th roots of unity. After that, either the Odlyzko-Sch\"onhage
rational function method, 
which is based on Taylor expansions, or the Greengard--Rokhlin
algorithm~\cite{greengard-rokhlin}, 
which additionally uses certain recursions and translations, 
can be used to perform the rational function evaluation.
The Greengard--Rokhlin algorithm seems to perform better 
in practice; see \cite{gourdon} for an example implementation in the context of
the zeta function.\footnote{There is vast literature on improvements
and generalizations of the Greengard--Rokhlin algorithm; 
see e.g.\ \cite{beatson-greengard} for a survey.}
%\footnote{It seems that there is a method to compute $Z(h)$ at multiple points
%due to J. Franke, but we could not locate the original 
%reference.}

Either of these methods yields 
an essentially optimal average time (up to logarithmic factors)
for computing $Z(h)$. In fact, 
using the notation so far
and letting $M=\max\{H,K\}$, 
one easily deduces the following from \cite[\textsection{3} \& Theorem 4.1]{odlyzko-schonhage}.

\begin{theorem}[Odlyzko--Sch\"onhage]\label{os algorithm}
The function $Z(h)$ can be computed for all integers $h\in [-H/2,H/2)$ 
with error $< \epsilon_3$ using $\ll M \log^2(M/\epsilon_3)$
arithmetic operations on numbers of $\ll \log(M/\epsilon_3)$ bits
and $\ll M \log^2(M/\epsilon_3)$ bits of storage.
\end{theorem}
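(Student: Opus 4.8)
The plan is to reduce the multi-evaluation of $Z(h)$ to a single rational-function evaluation on the unit circle and then invoke the fast multipole machinery underlying the Odlyzko--Sch\"onhage algorithm. First I would record the algebraic bridge between the values $Z(h)$ and the function $f(z):=\sum_{k=1}^K \beta_k/(z-e^{-2\pi i \alpha_k})$ with $\beta_k$ as in \eqref{rational func}. After a harmless global phase adjustment reducing the range $h\in[-H/2,H/2)$ to $h\in[0,H)$, summing the finite geometric series in $h$ shows that the discrete Fourier transform $\hat Z(m):=\sum_h Z(h)\,\omega_m^{-h}$ equals $f(\omega_m^{-1})$, where $\omega_m:=e^{2\pi i m/H}$ and the coefficients $\beta_k$ in \eqref{rational func} are chosen precisely to absorb the geometric-sum factors. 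Consequently, once the values of $f$ at the $H$-th roots of unity are known, a single inverse FFT of length $H$ recovers every $Z(h)$ at a cost of $\ll H\log H$ operations; padding $H$ and $K$ up to a common power of two bounded by $O(M)$, $M=\max\{H,K\}$, keeps this within budget.

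The substantive step is the evaluation of $f$, a sum of $K$ simple poles, at the $H$ roots of unity; a naive evaluation would cost $HK$ operations, and the task is to reduce this to nearly linear. Here I would apply the fast multipole method (the Greengard--Rokhlin scheme, or the Odlyzko--Sch\"onhage Taylor-expansion variant). One organizes the poles $e^{-2\pi i \alpha_k}$ and the targets into a hierarchy of $O(\log M)$ levels of arcs on the unit circle. For a target arc well separated from a source arc, the kernel $1/(z-p)$ is replaced by a multipole (Laurent) expansion about the source-arc center; this expansion converges geometrically in the separation ratio, so truncating after $\ll \log(1/\epsilon_3)$ terms suffices, and the contributions of distant arcs are accumulated level by level, while the few near-field interactions between adjacent arcs are evaluated directly. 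Summing $O(\log M)$ levels, each costing $\ll M\log(1/\epsilon_3)$ operations, yields the claimed $\ll M\log^2(M/\epsilon_3)$ total, with storage of the same order.

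The delicate point, and the one I expect to be the main obstacle, is the error and precision analysis forced by the poles lying exactly on the unit circle where we evaluate: $|z-e^{-2\pi i \alpha_k}|$ can be as small as $\asymp 1/M$, so individual terms reach size $\asymp M$ and partial sums can be large, threatening loss of significance. One must verify that the geometric convergence of the multipole expansions is uniform across the hierarchy and that carrying $\ll \log(M/\epsilon_3)$ bits of precision controls the accumulated round-off, so that the total error over the $\ll M$ terms and $O(\log M)$ levels stays below $\epsilon_3$. This is exactly what is quantified in \cite[\textsection 3 \& Theorem 4.1]{odlyzko-schonhage}; hence the deduction reduces to matching parameters, reindexing so that $M=\max\{H,K\}$, and reading off the stated operation count, bit length, and storage bounds.
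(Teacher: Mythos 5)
Your proposal is correct and follows essentially the same route as the paper: the FFT/geometric-series reduction to evaluating the rational function \eqref{rational func} at the $H$-th roots of unity, followed by the fast multipole (or Taylor-expansion) evaluation, with the quantitative error, precision, and operation-count bounds ultimately deferred to \cite[\textsection{3} \& Theorem 4.1]{odlyzko-schonhage}. This matches the paper, which presents exactly this outline and deduces the theorem from that same reference.
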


One can also use a nonuniform fast Fourier transform method instead, 
which avoids the
intermediate step involving the rational functions \eqref{rational func}; see
\cite{dr93,gl04}. This is 
likely be more efficient in practice.

\section{Proof of Theorem~\ref{main thm}}\label{main thm proof}

\begin{proof}
We assume that $Q>10^4$, otherwise the computation of $F(t,q)$ can be done directly
for each quadratic conductor $q\in [Q,Q+\Delta)$.
In what follows, we will use the estimate (valid for $w > 0$ and
$\re(z)\le 1$)
\begin{equation}
|G_z^{(r)}(w)| \le \int_1^{\infty} e^{-wy} y^r\,dy
\le w^{-r-1} \int_0^{\infty} e^{-u} u^r\,du \le \frac{r!}{w^{r+1}}.
\end{equation}
This estimate implies, in particular, that $|c_r(t,n)| \le  Q/\pi$. 
This bound is implicitly used when we apply Theorem~\ref{os algorithm} next.
(Theorem~\ref{os algorithm}
assumes that the coefficients of the exponential sum  $Z(h)$ are bounded by $\ll 1$,
and more generally the conclusion of the theorem still holds if the coefficients
are bounded by $\ll M$, say.)

We use Theorem~\ref{os algorithm} to bound the complexity 
of computing, to within $\epsilon_3\in (0,1)$, the sum
\begin{equation}
S_r(t,a,b)=\sum_{\substack{1\le m \le N/a \\0\le \ell < 4m}} 
\frac{c_r(t,am) \, e^{\pi i b \ell^2/2m}}{\sqrt{m}}
\end{equation}
for all $b\in [Q/a, Q/a+\Delta/a)$, $a\in [1,N]$, and $r\in [0,R)$. 
Given $a$, the length of the sum $S_r(t,a,b)$ is 
about $2N(N+a)/a^2$ terms,
and the number of points $b$ 
where we want to evaluate it is $\le \Delta/a+1$ points. Therefore, 
by Theorem~\ref{os algorithm}, 
there exists an absolute constant
$A_5$ such that the total cost of computing $S_r(t,a,b)$ for a given $a$ and $r$,
and for all $b\in [Q/a, Q/a+\Delta/a)$, is 
\begin{equation}
\le A_5 (\Delta/a + N^2/a^2)\log^2 ((\Delta+N)/\epsilon_3)
\end{equation}
arithmetic operations on numbers of $\log(\Delta+N)/\epsilon_3)$ bits 
and requiring a similar amount of storage space.
Summing over $a$ and $r$, 
this cost is 
\begin{equation}\label{total cost 2}
\le A_6 R (\Delta \log N+ N^2) \log^2 ((\Delta+N)/\epsilon_3)
\end{equation}
arithmetic operations, 
where $A_6$ is an absolute constant (e.g.\ we can 
take $A_6 =2 A_5$).
We choose $R$ to be the ceiling of the lower bound in \eqref{R choice}.
In addition, we note that $\Delta < 2\pi N^2/\log N$
since $Q>10^4$ by assumption. So there is a constant
$A_7$ such that \eqref{total cost 2} is bounded by 
\begin{equation}\label{total cost 3}
\le A_7 N^2 \frac{\log(N/\epsilon_2)}{\log(Q/\Delta)}
\log^2(N/\epsilon_3)
\end{equation}
Choosing $N$ to be the ceiling of the lower bound in \eqref{N choice}, 
and substituting into \eqref{total cost 3} now gives that this is
\begin{equation}
\le A_8 Q \log(Q/\epsilon_1) \frac{\log(Q/(\epsilon_1\epsilon_2))}{\log(Q/\Delta)}
\log^2(Q/(\epsilon_1\epsilon_3)),
\end{equation}
where $A_8$ is an absolute constant.

We choose $\epsilon_1=\epsilon_2=\epsilon/8$, 
and $\epsilon_3= \epsilon/(2R\sqrt{N Q})$. 
This choice of $\epsilon_3$ 
ensures that $\tilde{S}_r(t,q,a)$ can be
recovered from $S_r(t,a,b)$ 
via formula \eqref{tildeS S}
to within
$\epsilon/(2R\sqrt{Q})$.
In turn, $F(t,q)$ 
can be recovered from $\tilde{S}_r(t,q,a)$ 
via formulas \eqref{inc-exc} and \eqref{R2 formula}
to within $\epsilon/4$. (Here, we used that $|g(q)C(t,q)|\le 0.34$ 
for $t\in [0,1]$,
and we bounded the length of the sum in \eqref{inc-exc} by $d(q)\le \sqrt{Q}$, which is
very generous.)  
Overall, 
the recovery process thus described takes at most an additional $\le A_9 d(q) R$ operations for some absolute constant $A_9$.
Finally, the value of $Z(t,\chi_q)$ can be computed 
 using formula \eqref{afe error}, 
and the accumulated 
(roundoff and truncation) error in computing $Z(t,\chi_q)$ this way 
is $< 2\epsilon_1+2\epsilon_2+R\sqrt{N Q} \epsilon_3\le \epsilon$.

Note that that to execute formula \eqref{inc-exc}, 
we needed information about the factorization of $q$.
Acquiring this information causes little difficulty as 
one can determine the factorization of each $q\in [Q,Q+\Delta)$ 
in poly-log time on average using an obvious generalization of the 
sieve of Eratosthenes. 
\end{proof}

\section{Improvements}\label{improvements}

The most significant improvement to Theorem~\ref{main thm} would be to reduce the
window size $\Delta$ that is required for optimal performance, currently 
$\Delta \asymp Q$. 
One might even hope for 
optimal performance over windows as small as $\Delta \asymp \sqrt{Q}$. 
Unfortunately, it is not clear how to do this, though
a key step in that direction seems to involve repeated applications of
the reciprocity law for the quadratic Gauss sum $g_b(m)$ 
in order reduce the length of the main sum.

One can use various identities for $g_b(m)$ to obtain 
immediate savings. For example, 
\begin{equation}\label{gb id}
g_b(2m) = 4\sum_{\ell = 0}^{m-1} e^{\pi i b \ell^2/2m} + 
2\left\{\begin{array}{ll}
e^{\pi i m/2}-1 & \textrm{if $b\equiv 1\pmod*{4}$,}\\
e^{3\pi i m/2}-1 &  \textrm{if $b\equiv 3\pmod*{4}$.}
\end{array}\right.
\end{equation}
Using this identity enables cutting the length of the sum $S_r(t,a,b)$ from 
about $2(N/a)^2$ terms to about $\frac{1}{2}(N/a)^2$ terms, 
a speed-up by a factor of $4$. 
There may be other identities and symmetries of $g_b(m)$ that 
can be similarly exploited.

Another potentially interesting  improvement is 
to obtain a ``hybrid algorithm'' that works uniformly
in the $t$ and $q$ aspects. One difficulty here is that  
the dependence on $t$ in the main sum 
occurs implicitly, through the weighting function $V$.  
However, as suggested in \cite{rubinstein}, one can 
 express $V$ using a Riemann sum
 and then interchange the order of summation and integration. 
This could produce an exponential sum more amenable to fast evaluation via
the algorithm underlying Theorem~\ref{os algorithm}.

\section{Lemmas}\label{proofs of lemmas}

\begin{lemma}\label{R1 lemma}
Assume that $Q>10^4$ and $Q>\Delta$.
If $N\ge  \sqrt{(2Q/\pi)\log(Q/\epsilon_1)}$, 
then $|\mathcal{R}_1(t,N,Q,\Delta)|< \epsilon_1$.
\end{lemma}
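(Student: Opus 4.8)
The plan is to recognize $\mathcal{R}_1(t,N,q)$ as the tail of the defining series and to control it by the rapid decay of the incomplete Gamma function. Comparing \eqref{afe Z} with \eqref{afe error} shows that
\[
\mathcal{R}_1(t,N,q) = e^{i\theta(t,\mathfrak{a})}\sum_{n=N+1}^{\infty}\frac{\chi(n)}{n^{1/2+it}}\,V_{z}(\pi n^2/q),\qquad z:=1/2+it+\mathfrak{a},
\]
so, using $|e^{i\theta}|=1$, $|\chi(n)|\le 1$, $|n^{-1/2-it}|=n^{-1/2}$, and the definition of $V_z$ in \eqref{V def}, it suffices to bound
\[
|\mathcal{R}_1(t,N,q)| \le \frac{1}{|\Gamma(z/2)|}\sum_{n=N+1}^{\infty}\frac{|\Gamma(z/2,\pi n^2/q)|}{\sqrt{n}}
\]
uniformly for $q\in[Q,Q+\Delta)$. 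Since we have fixed $\mathfrak{a}=0$, here $\re(z/2)=1/4$.

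Next I would bound the incomplete Gamma function crudely. From \eqref{incomp gamma} we have $\Gamma(z/2,w)=\int_w^{\infty}e^{-u}u^{z/2-1}\,du$, whence
\[
|\Gamma(z/2,w)| \le \int_w^{\infty}e^{-u}u^{-3/4}\,du \le w^{-3/4}e^{-w}\qquad(w>0),
\]
because $u^{-3/4}$ is decreasing. This estimate already suffices: the exponent $\re(z/2)-1=-3/4<0$ makes the polynomial prefactor harmless, and, crucially, the bound is independent of $t$ (the $t$-dependence enters only through the constant $1/|\Gamma(z/2)|=1/|\Gamma(1/4+it/2)|$, which is bounded for bounded $t$ and grows at most like $e^{\pi t/4}$ by Stirling).

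Then I would carry out the summation. The hypotheses $\Delta<Q$ give $q<2Q$, so $w=\pi n^2/q>\pi n^2/(2Q)$ for every $q$ in the window, and $w^{-3/4}n^{-1/2}=(q/\pi)^{3/4}n^{-2}\le (2Q/\pi)^{3/4}N^{-2}$ for $n>N$. It remains to estimate $\sum_{n>N}e^{-\pi n^2/(2Q)}$; writing $n^2\ge N^2+2N(n-N)$ collapses this into a geometric series bounded by $e^{-\pi N^2/(2Q)}\,(1-e^{-\pi N/Q})^{-1}$. The choice \eqref{N choice}, namely $N^2\ge(2Q/\pi)\log(Q/\epsilon_1)$, forces $e^{-\pi N^2/(2Q)}\le\epsilon_1/Q$, while $N>\sqrt{2Q/\pi}$ makes the geometric factor $O(Q/N)$ and $N^{-2}\le \pi/(2Q\log(Q/\epsilon_1))$. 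Multiplying the factors $(2Q/\pi)^{3/4}$, $N^{-2}$, $\epsilon_1/Q$, and $O(Q/N)$ together (with $N\asymp\sqrt{Q\log Q}$) shows the sum is smaller than $\epsilon_1$ by a margin of order $Q^{-3/4}$ up to the constant $1/|\Gamma(z/2)|$ and a logarithm, so $|\mathcal{R}_1(t,N,Q,\Delta)|<\epsilon_1$ holds with great room to spare; this slack is exactly what lets a single $N$ serve for all $q$ in the window.

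The only genuinely delicate point is checking that the prescribed truncation really lands past the drop-off of $V$: one must verify that $N^2\ge(2Q/\pi)\log(Q/\epsilon_1)$ together with $q<2Q$ pushes $w=\pi n^2/q$ beyond $\log(Q/\epsilon_1)$ for all $n>N$, so that $e^{-w}$ delivers the advertised $\epsilon_1/Q$ smallness uniformly in $q$. Everything after that is the routine geometric-series estimate above. (If one wanted the lemma for unbounded $t$, the incomplete-Gamma bound would need refining near the transition $w\approx|z/2|\approx t/2$, and $N$ would be taken in terms of the analytic conductor $\mathfrak{q}$ rather than $Q$; but for the $t=O(1)$ regime relevant to Theorem~\ref{main thm} the crude bound suffices.)
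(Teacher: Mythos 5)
Your proposal is correct and follows essentially the same route as the paper: you bound each tail term by $n^{-1/2}\,w^{-3/4}e^{-w}$ with $w=\pi n^2/q$ using the decay of the incomplete Gamma function (the paper gets the identical bound by estimating $y^{\mathfrak{a}/2-3/4}\le 1$ inside the integral), then use $q<2Q$ and the choice of $N$ to conclude. The only difference is cosmetic: you sum the tail by a geometric series via $n^2\ge N^2+2N(n-N)$, whereas the paper compares the sum to an integral; both yield the same conclusion with ample room to spare.
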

\begin{proof}
First, we recall the definition \eqref{V def} of the weighting
function $V_z(w)$, the integral representation 
\eqref{incomp gamma} of $\Gamma(z,w)$, and the lower bound
$\min_{t\in [0,1]} |\Gamma(1/4+it/2)| > 1$. 
Using this, we deduce that
the size of $n$-th term in the tail of the series \eqref{afe Z} 
is  
\begin{equation}\label{afe Z term}
\le \frac{1}{\sqrt{n}} \left(\frac{\pi n^2}{q}\right)^{\mathfrak{a}/2+1/4} \int_1^{\infty} e^{-\pi
n^2 y/q} y^{\mathfrak{a}/2-3/4}dy < 
\frac{1}{\sqrt{n}}\left(\frac{q}{\pi n^2}\right)^{3/4-\mathfrak{a}/2}
e^{-\pi n^2/q},
\end{equation}
where we arrived at the last inequality 
on using that $\mathfrak{a}\le 1$ together with 
the trivial bound
$y^{\mathfrak{a}/2-3/4} \le 1$ for $y\ge 1$. 
Thus, summing the terms in  the series \eqref{afe Z} over 
$n>N$, bounding each term using \eqref{afe Z term},
and estimating the sum by an integral (c.f.\ \cite[page 320]{weinberger}) gives 
\begin{equation}
|\mathcal{R}_1(t,N,Q,\Delta)|  
\le \frac{1}{2}\left(\frac{q}{\pi}\right)^{7/4} \frac{e^{-\pi N^2/q}}{N^2}.
\end{equation}
Taking $N$ as in the lemma and using the bound $q<2Q$ 
yields the desired bound.
\end{proof}

\begin{lemma}\label{R2 lemma}
Assume $Q>10^4$ and $Q>\Delta$.
If $R \ge  \log(N/\epsilon_2)/\log(Q/\Delta)$,
then $|\mathcal{R}_2(t,N,Q,\Delta,R)|< \epsilon_2$. 
\end{lemma}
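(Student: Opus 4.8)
The plan is to identify $\mathcal{R}_2$ with a sum of Taylor remainders and bound each one using the integral form of the remainder together with a derivative estimate that exploits the exact value $\re(1/4+it/2)=1/4$. Starting from \eqref{Ftchi} and the definitions of $C(t,q)$ and $G_z(w)$, one has $F(t,q)=C(t,q)\sum_{n=1}^N\chi_q(n)G_{1/4+it/2}(\pi n^2/q)$, and inserting the (absolutely convergent) expansion \eqref{G taylor exp} and interchanging the order of summation gives $F(t,q)=C(t,q)\sum_{r\ge0}((Q-q)/q)^rF_r(t,q)$. Comparing with \eqref{R2 formula} shows
\begin{equation}
\mathcal{R}_2(t,N,q,R)=C(t,q)\sum_{n=1}^N\chi_q(n)\,T_n,\qquad T_n:=G_{1/4+it/2}\!\left(\tfrac{\pi n^2}{q}\right)-\sum_{r=0}^{R-1}c_r(t,n)\left(\tfrac{Q-q}{q}\right)^r,
\end{equation}
so that $T_n$ is precisely the $R$-th Taylor remainder of $w\mapsto G_{1/4+it/2}(w)$ expanded about $w_0=\pi n^2/Q$ and evaluated at $w=\pi n^2/q$. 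Thus it suffices to bound $|C(t,q)\,T_n|$ term by term.

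Next I would write $T_n$ in integral remainder form, $T_n=\frac{1}{(R-1)!}\int_{w_0}^{w}(w-u)^{R-1}G^{(R)}_{1/4+it/2}(u)\,du$, and estimate the derivative sharply. Differentiating the integrand in \eqref{incomp gamma} $R$ times and keeping $\re(1/4+it/2)=1/4$ yields $|G^{(R)}_{1/4+it/2}(u)|\le\int_1^\infty e^{-uy}y^{R-3/4}\,dy\le\Gamma(R+\tfrac14)\,u^{-R-1/4}$. Since $q>Q$ gives $w<w_0$, on the interval of integration $u\ge w=\pi n^2/q$, so $u^{-R-1/4}\le w^{-R-1/4}$; carrying out the $(w-u)^{R-1}$ integration contributes a factor $|w_0-w|^R/R$, and using $|w_0-w|/w=(q-Q)/Q<\Delta/Q$ one obtains
\begin{equation}
|T_n|\le\frac{\Gamma(R+\tfrac14)}{\Gamma(R+1)}\left(\frac{\Delta}{Q}\right)^{R}w^{-1/4}=\frac{\Gamma(R+\tfrac14)}{\Gamma(R+1)}\left(\frac{\Delta}{Q}\right)^{R}\left(\frac{q}{\pi}\right)^{1/4}n^{-1/2}.
\end{equation}

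The crucial point is now the cancellation: by \eqref{V def} and the lower bound $|\Gamma(1/4+it/2)|>1$ used in the proof of Lemma~\ref{R1 lemma}, $|C(t,q)|\le(\pi/q)^{1/4}$, which exactly cancels the factor $(q/\pi)^{1/4}$ in $|T_n|$ and removes all dependence on $Q$. Using $\Gamma(R+\tfrac14)/\Gamma(R+1)\le1$ for every integer $R\ge1$ leaves $|C(t,q)\,T_n|\le(\Delta/Q)^{R}n^{-1/2}$, and summing over $n\le N$ with $\sum_{n\le N}n^{-1/2}\le2\sqrt N$ gives $|\mathcal{R}_2|\le2\sqrt N\,(\Delta/Q)^{R}$. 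Finally, the hypothesis $R\ge\log(N/\epsilon_2)/\log(Q/\Delta)$ is equivalent to $(\Delta/Q)^{R}\le\epsilon_2/N$, so $|\mathcal{R}_2|\le2\epsilon_2/\sqrt N<\epsilon_2$ once $N>4$, which holds for the truncation point \eqref{N choice} since $Q>10^4$ forces $N>\sqrt{2Q/\pi}$.

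The step I expect to be the main obstacle is getting the powers of $Q$ to cancel. A naive bound that uses only $\re(z)\le1$ (as in the proof of Theorem~\ref{main thm}), giving $|G^{(R)}(u)|\le R!\,u^{-R-1}$, loses a factor of roughly $Q^{3/4}$ on the small-$n$ terms and leaves $|\mathcal{R}_2|\asymp Q^{1/4}\epsilon_2$, which is useless. Likewise, bounding $T_n$ term by term and summing the geometric series in $r$ introduces a factor $1/(1-\Delta/Q)$ that blows up as $\Delta\to Q$, exactly the regime the lemma must cover. Both difficulties are avoided simultaneously by using the integral remainder (a single power $(\Delta/Q)^R$, with no geometric tail) together with the sharp exponent $-R-\tfrac14$ coming from $\re(z)=1/4$, which is what produces the exact cancellation against $C(t,q)$.
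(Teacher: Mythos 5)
Your proof is correct and takes essentially the same route as the paper's: the paper likewise treats $\mathcal{R}_2$ as a sum of integral-form Taylor remainders, bounds each remainder by $\frac{(q/Q-1)^R}{R!}\Gamma(R,\pi n^2/q)\le \frac{(q/Q-1)^R}{R}$ (citing Lemma 2.1 of Stopple rather than deriving it), then applies $|C(t,q)|\le(\pi/q)^{1/4}$ and $\sum_{n\le N}n^{-1/2}\le 2\sqrt{N}$ and concludes with the same choice of $R$. The only differences are in execution: you derive the derivative bound yourself with the sharp exponent $\re(z)=1/4$, which produces the exact cancellation against $C(t,q)$ and makes the $n^{-1/2}$ bookkeeping self-contained, at the small cost of needing $N>4$ at the end (which you justify from the context), whereas the paper keeps the factor $(\pi/Q)^{1/4}/R$ and so needs no condition on $N$.
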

\begin{proof}
If we truncate the Taylor series \eqref{G taylor exp} at $r=R$, 
then the remainder can be expressed using the well-known integral 
form for the Taylor remainder. 
By \cite[Lemma 2.1]{stopple2009},  this remainder, call it 
$\upsilon(q,r)$, is of size 
%Proceeding as in the proof of Lemma~\ref{R2 lemma}
%and differentiating under the integral sign, we obtain
\begin{equation}
|\upsilon(q,r)|\le \frac{(q/Q-1)^R}{R!} \Gamma(R,\pi n^2/q) \le \frac{(q/Q-1)^R}{R}.
\end{equation}
Therefore, recalling \eqref{R2 formula} and using that $|C(t,q)|\le
(\pi/q)^{1/4}$ for $t\in [0,1]$, and $\sum_{1\le n\le N} 1/\sqrt{n} \le
2\sqrt{N}$, we obtain
\begin{equation}
|\mathcal{R}_2(t,N,q,R)| < 2\sqrt{N} \left(\frac{\pi}{q}\right)^{1/4} \frac{(q/Q-1)^R}{R}.
\end{equation}
Since $Q\le q < Q+\Delta$, this gives
\begin{equation}
|\mathcal{R}_2(t,N,Q,\Delta,R)| < \frac{2\sqrt{N}}{R} \left(\frac{\pi}{Q}\right)^{1/4}
\left(\frac{\Delta}{Q}\right)^R.
\end{equation}
So, taking $R$ as in the lemma more than suffices to 
give the desired bound.

\end{proof}

\bibliographystyle{amsplain}
\bibliography{centAlg}
\end{document}